\newcommand{\beas}{\begin{eqnarray*}}
\newcommand{\eeas}{\end{eqnarray*}}
\newcommand{\bea}{\begin{eqnarray}}
\newcommand{\eea}{\end{eqnarray}}
\newcommand{\beq}{\begin{equation}}
\newcommand{\eeq}{\end{equation}}
\newcommand{\ben}{\begin{enumerate}}
\newcommand{\een}{\end{enumerate}}
\newtheorem{theorem}{Theorem}%[section]
\theoremstyle{definition}
\newtheorem{remark}[theorem]{Remark}
\definecolor{darkblue}{rgb}{0,0,0.6}
\author[Richard P. Stanley]{Richard P. Stanley}
\address{Department of Mathematics, MIT, Cambridge, MA 02139}
\email{rstan@math.mit.edu}
\author[Fabrizio Zanello]{Fabrizio Zanello}
\address{Department of Mathematical Sciences, Michigan Tech, Houghton, MI 49931}
\email{zanello@mtu.edu}
\title[On the asymptotics of the number of $O$-sequences of given length]{A note on the asymptotics of the number of $O$-sequences of given length}
\begin{document}

\begin{abstract} 
We look at the number $L(n)$ of $O$-sequences of length $n$. Recall that an $O$-sequence can be defined algebraically as the Hilbert function of a standard graded $k$-algebra, or combinatorially as the $f$-vector of a multicomplex. The sequence $L(n)$ was first
investigated in a recent paper by commutative algebraists Enkosky and
Stone, inspired by Huneke. In this note, we significantly improve both
of their upper and lower bounds, by means of a very short
partition-theoretic argument. In particular, it turns out that, for
suitable positive constants $c_1$ and $c_2$ and all $n>
2$, $$e^{c_1\sqrt{n}}\le L(n)\le e^{c_2\sqrt{n}\log n}.$$ It remains
an open problem to determine an exact asymptotic estimate for $L(n)$. 
\end{abstract}

\keywords{Hilbert function; $O$-sequence; graded algebra; artinian algebra; integer partition}
\subjclass[2010]{Primary: 13D40; Secondary: 13E10, 05E40, 05A16}

\maketitle

%\section{Introduction} 

In this brief note, we investigate the number $L(n)$ of
$O$-sequences $h=(h_0,h_1,\dots,h_e)$ of length $n$. In other words,
$h$ is the Hilbert function of some standard graded (artinian)
$k$-algebra $A=\oplus_{i=0}^eA_i$, where $h_i=\dim_k A_i$ and
$\sum_{i=0}^eh_i=n$. Equivalently, in purely combinatorial terms, an $O$-sequence can also be viewed as the $f$-vector of a multicomplex.  We refer to \cite{St} for any standard facts or unexplained terminology.

Even though an explicit description of which integer vectors arise as
$O$-sequences has been around for nearly a century, thanks to a
celebrated result of Macaulay \cite{Ma}, many enumerative properties
of these important Hilbert functions remain obscure to this day (see
\cite{Li} for some progress). A very natural and intriguing problem in
this line of research is the study of $L(n)$, recently initiated in
\cite{ES}, where nontrivial upper and lower bounds were shown.  Interestingly, this problem was already posed back in 1994, though it appears that no progress was made at the time; see Roberts \cite{Ro}.  (We thank Adam Van Tuyl for providing this reference.)

Our present goal is to greatly simplify and improve both bounds given
in \cite{ES}, by means of the next theorem. Our lower bound (which is
asymptotically better than the one in \cite{ES} by roughly an exponent
of $\sqrt{2}$) boils down to a simple consideration; our more
significant improvement is in the upper bound, which has logarithmic
order $\sqrt{n}\log n$ rather than $n$. 

\begin{theorem}
For suitable constants $c_1,c_2>0$ and all $n>2$, we have
$$e^{c_1\sqrt{n}}\le L(n)\le e^{c_2\sqrt{n}\log n}.$$
\end{theorem}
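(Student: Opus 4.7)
The plan is to prove the two bounds separately, with the lower bound being a one-line partition-counting observation and the upper bound requiring a slightly more delicate structural analysis.

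For the lower bound, I would show that every partition of $n-1$ yields an $O$-sequence of length $n$. Given a partition $\mu_1 \ge \mu_2 \ge \cdots \ge \mu_k$ of $n-1$, consider $h=(1,\mu_1,\mu_2,\ldots,\mu_k)$. This is an $O$-sequence because $h_{i+1}\le h_i$ and Macaulay's transform always satisfies $m^{\langle i\rangle}\ge m$ for $m\ge 1$; hence $h_{i+1}\le h_i\le h_i^{\langle i\rangle}$ trivially. The map is visibly injective, so $L(n)\ge p(n-1)$, and Hardy--Ramanujan gives $L(n)\ge e^{c_1\sqrt{n}}$ for any $c_1<\pi\sqrt{2/3}$ and $n$ sufficiently large.

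For the upper bound, the key structural input is the Macaulay identity $m^{\langle i\rangle}=m$ whenever $m\le i$: the Macaulay $i$-representation of such an $m$ is $\binom{i}{i}+\binom{i-1}{i-1}+\cdots+\binom{i-m+1}{i-m+1}$, which transforms term-by-term to $\binom{i+1}{i+1}+\cdots+\binom{i-m+2}{i-m+2}=m$. Consequently, in any $O$-sequence $(h_0,\ldots,h_e)$, once $h_i\le i$ (with $i\ge 1$) we have $h_{i+1}\le h_i^{\langle i\rangle}=h_i$, and iterating forces the sequence to be weakly decreasing from position $i$ onward.

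I would then let $i^*$ be the smallest $i\ge 1$ with $h_i\le i$ (set $i^*=e+1$ when no such $i$ exists). For $1\le i<i^*$ we have $h_i\ge i+1$, so $n\ge 1+\sum_{i=1}^{i^*-1}(i+1)=\binom{i^*+1}{2}$, which forces $i^*=O(\sqrt{n})$. An arbitrary $O$-sequence of sum $n$ is then determined by (a) its prefix $(h_0,\ldots,h_{i^*-1})$, an $O(\sqrt{n})$-tuple with entries in $\{1,\ldots,n\}$, contributing at most $n^{O(\sqrt{n})}$ choices; and (b) its suffix $(h_{i^*},\ldots,h_e)$, which by the observation above is a partition with largest part $\le i^*=O(\sqrt{n})$ and sum $\le n$, contributing at most $p(n)\le e^{O(\sqrt{n})}$ choices. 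Multiplying (and absorbing polynomial factors for the choice of $i^*$ and of the suffix sum) yields $L(n)\le e^{c_2\sqrt{n}\log n}$.

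The hard part is really just spotting the identity $m^{\langle i\rangle}=m$ for $m\le i$: once that is in hand, the decomposition of an $O$-sequence into a short growth prefix (of length $O(\sqrt{n})$) and a partition-shaped tail is immediate, and the counting collapses to elementary estimates on integer partitions.
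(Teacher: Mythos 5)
Your proposal is correct and follows essentially the same route as the paper: the lower bound via $L(n)\ge p(n-1)$ from nonincreasing sequences, and the upper bound by splitting at the first index $j$ with $h_j\le j$, bounding the prefix by $n^{O(\sqrt{n})}$ and the tail by $p(n)$. The only difference is that you spell out the identity $m^{\langle i\rangle}=m$ for $m\le i$, which the paper leaves as an immediate consequence of Macaulay's theorem.
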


\begin{proof}
The lower bound is trivial, since by Macaulay's theorem, any sequence $(1,h_1,\dots,h_e)$ where the $h_i$ are nonincreasing is an $O$-sequence. Thus, $L(n)\ge p(n-1)$, the number of integer partitions of $n-1$. It is well known \cite{HR} that $p(n)$ is asymptotic to $\frac{1}{4n\sqrt{3}}e^{\pi\sqrt{2n/3}}$ for $n$ large, whence our bound easily follows. (Up to polynomial factors, this asymptotically improves by an exponent of $\sqrt{2}$ the bound $L(n)\ge q(n)$ shown in \cite{ES}, where $q(n)$ denotes the number of distinct-part partitions of $n$, which required a substantial amount of work. Our bound is also sharper for each $n$, since $p(n-1)\ge q(n)$, with strict inequality for $n\ge 4$.)

As for the upper bound, consider the smallest index $j$ that satisfies $h_j\le j$. It immediately follows from Macaulay's theorem that $h_i\ge h_{i+1}$ for all $i\ge j$. Hence the final portion of $h$, $(h_j,\dots,h_e)$, coincides with an integer partition of $n - (h_0+\dots+h_{j-1})$, and therefore the number of possible choices for $(h_j,\dots,h_e)$ can be bounded from above by $p(n)$.

Now note that $j$ is smaller than $\sqrt{2n}$, since $h$ has length $n$. Further, the number of possible choices for the first portion of $h$, $(h_0,\dots,h_{j-1})$, is trivially bounded by $n^{\sqrt{2n}}=e^{\sqrt{2n} \log n}$. We conclude that the total number of $O$-sequences of length $n$ is at most
$$\sqrt{2n} \cdot p(n)\cdot  e^{\sqrt{2n} \log n} \ll e^{c \sqrt{n} \log n},$$
and the theorem follows.
\end{proof}

\begin{remark} While trying to determine a nice, explicit formula for
  $L(n)$ might be a hopeless task, it would be interesting to at least
  establish its precise asymptotic value, by further improving one (or
  both?) of our bounds. Also intriguingly it is reasonable to continue
  to expect a connection between this problem and estimates of
  partition-theoretic interest. In view of our theorem, the degrees
  where it is critical to control the behavior of $h$ lie roughly
  between $\sqrt{n}/\log n$ and  $j$, if this latter is larger. We
  only point out here that the $h_i$ can be nicely described in that range
  (in fact, as early as in degrees of order $n^{1/3}$); namely, we
  have $$h_i=\binom{i+1}{i}+\binom{i}{i-1}+\dots+\binom{i-t_i+1}{i-t_i}+\alpha_i,$$  
for suitable integers $t_i\ge 0$ and $0\le \alpha_i< i-t_i$ such that the $t_i$ are nonincreasing, and the $\alpha_i$ are nonincreasing throughout each range where the corresponding $t_i$ are constant.
\end{remark}

\section*{Acknowledgements} We wish to thank Adam Van Tuyl for informing us of reference \cite{Ro}. The key ideas of this paper were discussed during a visiting professorship of the second author in Fall 2017, for which he warmly thanks the first author and the MIT Math Department. The second author was partially supported by a Simons Foundation grant (\#274577).

%%%%%%%%%%%%%%%%%%%%%%%%%%%%%%%%%%%%%%%%%%%%%%%%%%%%%%%%%%%%%%


\begin{thebibliography}{llll}

%\bibitem{BH} W. Bruns and J. Herzog: ``Cohen-Macaulay rings,'' Cambridge Studies in Advanced Math. \textbf{39}, Revised Ed., Cambridge, U.K. (1998).

\bibitem{ES} T. Enkosky and B. Stone: \emph{A sequence defined by $M$-sequences}, Discrete Math. \textbf{333} (2014), 35--38.

\bibitem{HR} G.H. Hardy and S. Ramanujan: \emph{Asymptotic formulae in combinatory analysis}, Proc. London Math. Soc. (2) \textbf{17} (1918), 75--115.

\bibitem{Li} S. Linusson: \emph{The number of $M$-sequences and $f$-vectors}, Combinatorica \textbf{19} (1999), no. 2, 255--266.

\bibitem{Ma} F.H.S. Macaulay: \emph{Some properties of enumeration in the theory of modular systems}, Proc. London Math. Soc. (2) \textbf{26} (1927), 531--555.

\bibitem{Ro} L.G. Roberts: \emph{Open problems}, Problem (6), p. 330; in \lq \lq Zero-Dimensional Schemes,'' Proc. International Conference held in Ravello, June 1992, de Gruyter, Berlin (1994).

\bibitem{St} R.P. Stanley: ``Combinatorics and Commutative Algebra,'' Second Ed., Progress in Mathematics \textbf{41}, Birkh\"auser, Boston, MA (1996).

\end{thebibliography}
\end{document}